\documentclass{article}

\usepackage{arxiv}

\usepackage[utf8]{inputenc} 
\usepackage[T1]{fontenc}    
\usepackage{hyperref}       
\usepackage{url}            
\usepackage{booktabs}       
\usepackage{amsfonts}       
\usepackage{nicefrac}       
\usepackage{microtype}      
\usepackage{lipsum}		
\usepackage{graphicx}
\usepackage{doi}
\usepackage[backend=biber]{biblatex}
\addbibresource{references.bib}


\usepackage{lipsum}
\usepackage{amsfonts}
\usepackage{graphicx}
\graphicspath{ {images_paper} }
\usepackage{amssymb,amsmath,amsthm}
\usepackage{epstopdf}
\usepackage{algorithmic}
\usepackage{mathtools}
\usepackage{bbm}
\usepackage{booktabs}
\usepackage{todonotes}
\usepackage{multirow}
\usepackage{makecell}
\usepackage{setspace}
\usepackage[export]{adjustbox}
\usepackage{cases}
\usepackage[createShortEnv]{proof-at-the-end}
\usepackage[shortlabels]{enumitem}
\usepackage{hyperref}
\usepackage{cleveref}
\usepackage{glossaries-extra}
\ifpdf
  \DeclareGraphicsExtensions{.eps,.pdf,.png,.jpg}
\else
  \DeclareGraphicsExtensions{.eps}
\fi

\usepackage{subcaption}
\usepackage{tikz}
\usetikzlibrary{intersections}
\usetikzlibrary{angles, quotes}
\newcommand{\R}{\mathbb{R}}
\newcommand{\1}{\mathbbm{1}}

\renewcommand{\d}{\mathrm{d}}

\newcommand{\expo}[1]{\exp\left(#1\right)}

\newcommand{\dd}{\mathrm{d}}

\DeclareMathOperator*{\KL}{KL}

\newcommand{\cf}{\textit{cf.}}

\renewcommand{\phi}{\varphi}
\renewcommand{\epsilon}{\varepsilon}

\newcommand{\dissi}{a}
\newcommand{\dissii}{b}

\newabbreviation{lsi}{LSI}{log-Sobolev inequality}
\newabbreviation{sde}{SDE}{stochastic differential equation}
\newabbreviation{pde}{PDE}{partial differential equation}
\newabbreviation{fpe}{FPE}{Fokker-Planck equation}

\usepackage{enumitem}
\setlist[enumerate]{leftmargin=.5in}
\setlist[itemize]{leftmargin=.5in}






\usepackage{amsopn}


\newtheorem{assumption}{Assumption}
\newtheorem{theorem}{Theorem}

\title{On the Time Derivative of the KL Divergence for a Generalized Langevin Annealing Scheme}


\author{Andreas Habring\thanks{Institute of Visual Computing, Graz University of Technology, Austria (\href{andreas.habring@tugraz.at}{andreas.habring@tugraz.at}).}}



\hypersetup{
pdftitle={Habring_timederivative_KL},
pdfsubject={q-bio.NC, q-bio.QM},
pdfauthor={},
pdfkeywords={},
}

\begin{document}

\maketitle

\begin{abstract}
Consider the Langevin diffusion process $\d X_t = \nabla \log p_t(X_t)\d t + \sqrt{2}\d W_t$ guided by the time-dependent probability density $p_t(x)$. Let $q_t$ be the density of $X_t$.
Recently, in \cite{chehab2025provable,vempala2019rapid} in order to analyze convergence in the Kullback-Leibler divergence, the authors have made use of the time derivative of $t\mapsto \KL(q_t|p_t)$ without investigating in detail when such a derivative exists. In this short manuscript we provide a rigorous derivation of the quantity $\frac{\d}{\d t}\KL(q_t|p_t)$.
\end{abstract}
\keywords{Langevin diffusion, stochastic differenctial equations, partial differential equations, sampling.}

\section{Introduction}
Let $(X_t)_t$ be a Langevin diffusion process in $\R^d$ described by the \gls{sde}
\begin{equation}\label{eq:sde}
    \d X_t = \nabla\log p_t(X_t)\d t + \d W_t
\end{equation}
where $(W_t)_t$ denotes standard Brownian motion and $(p_t)_t$ is the Lebesgue density of a time-dependent probability distribution $\pi_t$. The process \eqref{eq:sde} can be understood a Langevin diffusion with a moving target, \cf~\cite{chehab2025provable}. We assume that the family $p_t$ satisfies the following properties.
\begin{assumption}\label{ass}\
    \begin{itemize}
        \item Differentiability: $p_t(x)\in C^1(\R^d\times (0,\infty))$.
        \item Uniform Lipschitz continuity: $\nabla p_t(x)$ is Lipschitz continuous with respect to $x$ uniformly in $t$, that is, there exists $L>0$ such that 
        \[
            |\nabla p_t(x)-\nabla p_t(y)|\leq L|x-y|,\quad t>0, \; x,y\in \R^d.
        \]
        \item Growth of the time-derivative: There exists $c>0$ such that 
        \[
            |\partial_t p_t(x)|\leq c|x|^2.
        \]
        \item Uniform dissipativity: There exist $\dissi,\dissii>0$ such that
    \[
        -x\cdot \nabla \log p_t(x) \geq \dissi |x|^2 - \dissii,\quad t>0, \; x\in \R^d.
    \]
    \end{itemize}
\end{assumption}
By standard theory under these assumptions~\eqref{eq:sde} admits a unique strong solution and, if $(\mu_t)_t$ denotes the distribution of $(X_t)_t$, it is well known that $\mu_t$ satisfies in the weak sense the \gls{pde}
\begin{equation}\label{eq:FP}
    \partial_t \mu_t = -\nabla\cdot (\mu_t\nabla\log p_t) + \Delta \mu_t.
\end{equation}
By~\cite[6.3.2 Corollary]{bogachev2022fokker} it follows that $\mu_t$ admits a density with respect to the Lebesgue measure, that is, $\mu_t = q_t \d x$ for some $q\in L^1_{loc}(\R^d\times (0,\infty))$. Moreover, $q$ is continuous and strictly positive on $\R^d\times (0,\infty)$ and $q_t\in W^{1,p}_{loc}(\R^d)$ for all $p>d+2$ (see in particular the intro to~\cite[Section 9.4]{bogachev2002uniqueness}). In addition, the (weak) solution to \eqref{eq:FP} is unique (\cf~\cite[Theorem 9.4.8]{bogachev2022fokker} with the trivial choice $V(x) = |x|^2)$ leading to a one-to-one correspondence between the Fokker-Planck equation and the \gls{sde}.

\section{Main result}
Recently, in~\cite{vempala2019rapid,chehab2025provable} authors have investigated the quantity $\frac{\d}{\d t}\KL(\mu_t|\pi_t)$.
Indeed, invoking \eqref{eq:FP}, formal computations yield
\begin{equation}\label{eq:goal eq}
    \begin{aligned}
        \frac{\d}{\d t}\KL(\mu_t|\pi_t) &= \frac{\d}{\d t}\int q_t\log\frac{q_t}{p_t}\d x\\
        & =\int \partial_tq_t\log\frac{q_t}{p_t} \d x+ \int q_t\partial_t\log\frac{q_t}{p_t}\d x\\
        &= -\int q_t|\nabla \log\frac{q_t}{p_t}|^2\d x 
        - \int q_t\partial_t\log p_t\d x.
    \end{aligned}
\end{equation}
It is, however, unclear under which circumstances these computations are, in fact, allowed. The aim of this article is to provide a rigorous proof of \eqref{eq:goal eq}.
\begin{theorem}
    Under \cref{ass}, equation \eqref{eq:goal eq} is satisfied, that is
    \begin{equation}
    \begin{aligned}
        \frac{\d}{\d t}\KL(\mu_t|\pi_t) = -\int q_t|\nabla \log\frac{q_t}{p_t}|^2 
        - \int q_t\partial_t\log p_t\d x.
    \end{aligned}
\end{equation}
\end{theorem}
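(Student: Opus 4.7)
The plan is to rewrite the Fokker--Planck equation~\eqref{eq:FP} in the equivalent form $\partial_t q_t=\nabla\cdot\bigl(q_t\nabla\log(q_t/p_t)\bigr)$ (using $\Delta q_t=\nabla\cdot(q_t\nabla\log q_t)$) and then rigorously justify the two formal manipulations in~\eqref{eq:goal eq}: (i)~exchanging $\partial_t$ with the spatial integral defining $\KL(\mu_t|\pi_t)$, and (ii)~integrating by parts in $x$ to transfer the derivative from $q_t$ onto $\log(q_t/p_t)$ while discarding the boundary term at infinity. The standard route for both operations is to first establish the identity against a smooth radial cutoff $\phi_R$ with $\phi_R\equiv 1$ on $B_R$, $\supp\phi_R\subset B_{2R}$, and $|\nabla\phi_R|\leq C/R$, and then to pass to the limit $R\to\infty$.

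Before the cutoff argument, I would fix a compact interval $[t_0,t_1]\subset(0,\infty)$ and collect four ingredients. \textbf{(a)}~A uniform moment bound $\sup_{t\in[t_0,t_1]}\int(1+|x|^k)q_t\,\d x<\infty$ for any $k$, obtained by applying It\^o's formula to $|X_t|^k$ and invoking uniform dissipativity in a Gr\"onwall estimate. \textbf{(b)}~A quadratic control $|\log p_t(x)|\leq C(1+|x|^2)$, where the upper bound on $p_t$ follows from integrating $r\mapsto r\cdot\nabla\log p_t(rx/|x|)$ along rays using dissipativity, and the lower bound comes from Lipschitzness of $\nabla p_t$ together with $\int p_t\,\d x=1$. \textbf{(c)}~Local-in-time integrability of the relative Fisher information $\int q_t|\nabla\log(q_t/p_t)|^2\,\d x$, derived by testing~\eqref{eq:FP} against a truncation of $\log(q_t/p_t)$ using the $W^{1,p}_{\mathrm{loc}}$ regularity of $q_t$ cited from~\cite{bogachev2022fokker}. \textbf{(d)}~Gaussian-type tail control $q_t(x)\leq C e^{-\alpha|x|^2}$ outside a ball, providing $(\log q_t)^2\leq C(1+|x|^4)$ on the tails, from standard parabolic Aronson/Harnack bounds applied to the equation with drift $\nabla\log p_t$.

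With these ingredients, define
\[
    F_R(t):=\int\phi_R\, q_t\,\log(q_t/p_t)\,\d x.
\]
Parabolic regularity of $q$ on $B_{2R}\times[t_0,t_1]$ permits pointwise differentiation under the integral, giving $F_R'(t)=\int\phi_R\,\partial_t q_t\,\log(q_t/p_t)\,\d x+\int\phi_R\, q_t\,\partial_t\log(q_t/p_t)\,\d x$. Integration by parts in the first piece, using $\partial_t q_t=\nabla\cdot(q_t\nabla\log(q_t/p_t))$, produces $-\int\phi_R\, q_t\,|\nabla\log(q_t/p_t)|^2\,\d x+\mathcal R_1(R,t)$, where $\mathcal R_1:=-\int q_t\log(q_t/p_t)\,\nabla\phi_R\cdot\nabla\log(q_t/p_t)\,\d x$ is supported on the annulus $A_R:=B_{2R}\setminus B_R$. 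The second piece simplifies via $\partial_t\log(q_t/p_t)=\partial_t q_t/q_t-\partial_t\log p_t$; conservation of mass $\int q_t\,\d x\equiv 1$ implies $\int\phi_R\partial_t q_t\,\d x\to 0$ as $R\to\infty$, and dominated convergence using~(a), (b) and the growth hypothesis $|\partial_t p_t|\leq c|x|^2$ (combined with the lower bound on $p_t$) yields the term $-\int q_t\partial_t\log p_t\,\d x$ in the limit.

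The main obstacle is showing that the remainder $\mathcal R_1(R,t)\to 0$ as $R\to\infty$. Cauchy--Schwarz bounds $|\mathcal R_1|^2\leq\bigl(\int q_t|\nabla\log(q_t/p_t)|^2\,\d x\bigr)\bigl(R^{-2}\int_{A_R}q_t\,|\log(q_t/p_t)|^2\,\d x\bigr)$; the first factor is finite by~(c), while the second is controlled via $|\log(q_t/p_t)|^2\leq 2(\log q_t)^2+2(\log p_t)^2\leq C(1+|x|^4)$ using~(b) and~(d), reducing to the annular tail $R^{-2}\int_{A_R}q_t(1+|x|^4)\,\d x$, which vanishes by~(a). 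Finally, integrating the identity for $F_R'(t)$ over any subinterval of $(0,\infty)$ and sending $R\to\infty$ by dominated convergence produces the integrated form of~\eqref{eq:goal eq}; continuity of both sides in $t$ then upgrades this to the pointwise statement and completes the proof.
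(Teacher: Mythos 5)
There is a genuine gap, and it sits exactly at the step the theorem is meant to justify. You differentiate $F_R$ under the integral sign and write $F_R'(t)=\int\phi_R\,\partial_t q_t\,\log(q_t/p_t)\,\d x+\int\phi_R\,q_t\,\partial_t\log(q_t/p_t)\,\d x$, which presupposes that $\partial_t q_t$ exists as a function and that \eqref{eq:FP} holds pointwise. Under \cref{ass} the Fokker--Planck equation is only known to hold in the weak (distributional) sense, and the available regularity of $q$ is continuity, positivity and $q_t\in W^{1,p}_{loc}$ in space; no classical time derivative is provided. The appeal to ``parabolic regularity'' does not close this: to get $q\in C^{2,1}$ locally one would need, for instance, H\"older continuity of $\nabla\cdot\nabla\log p_t$, i.e.\ control of second derivatives of the drift, whereas \cref{ass} only makes $\nabla\log p_t$ locally Lipschitz in $x$ and continuous in $t$. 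This is precisely the obstruction the paper's proof is organized around: it mollifies $q$ in space \emph{and} time, shows that the mollification $q^\epsilon$ satisfies the exact identity \eqref{eq:weak_dt} (so that the fundamental theorem of calculus applies to $q^\epsilon$), and only afterwards passes $\epsilon\to0$ using the two-sided Gaussian bounds \eqref{eq:density_exponential}, uniform convergence on compacts, and $L^2$ convergence of $\nabla q^\epsilon$. Without some such regularization (or an explicitly constructed $H^{-1}$-valued chain-rule argument, which you do not set up), your identity for $F_R'$ is the formal computation \eqref{eq:goal eq} restated with a cutoff, not a proof of it.

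Two further problems. Your ingredient (b) does not follow from the assumptions: Lipschitz continuity of $\nabla p_t$ together with $\int p_t\,\d x=1$ yields no global lower bound on $p_t$ (it only gives $|\nabla\sqrt{p_t}|\le\sqrt{L/2}$, which is vacuous at infinity), and indeed $p_t\propto e^{-|x|^4}$ satisfies every item of \cref{ass} while $\log p_t\sim-|x|^4$, so the claimed bound $|\log p_t|\le C(1+|x|^2)$ fails. This undermines the domination of the annulus remainder $\mathcal{R}_1$ and of the term $q_t\,\partial_t\log p_t$ exactly as you wrote them; note that the paper confines its quantitative two-sided exponential bounds to $q_t$, where they are actually available via \eqref{eq:density_exponential}, and uses only local (compact-set) bounds on $\log\pi_\tau$. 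Finally, your last sentence overclaims: integrating in time and letting $R\to\infty$ yields absolute continuity of $\tau\mapsto\KL(\mu_\tau|\pi_\tau)$ and hence the stated derivative almost everywhere, which is what the paper concludes; upgrading to differentiability at \emph{every} $t$ would require continuity in $t$ of the relative Fisher information, which neither you nor the paper establishes.
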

\begin{proof}
    By \cite[Proposition 7.3.10 Example, 8.2.4 Example, 8.2.5 Proposition]{bogachev2022fokker} we have that for any $0<\tau_1<\tau_2<\infty$ there exist $c_1,c_2>0$ such that 
    \begin{equation}\label{eq:density_exponential}
        \expo{-c_1(|x|^2+1))}\leq q_\tau(x)\leq \expo{-c_2|x|^2},\quad \tau\in [\tau_1,\tau_2],\; x\in \R^d.
    \end{equation}
    Let $\phi\in C^\infty_c(\R^d\times \R)$ be a mollifier with, in particular, $\phi(x,t)=0$ for $|t|\geq 1$, denote $\phi^\epsilon(x,t) = \frac{1}{\epsilon^{d+1}}\phi((x,t)/\epsilon)$ and $q_t^\epsilon=\tilde{q}*\phi^\epsilon(x,t)$ with the convolution in $x$ and $t$ where $\tilde{q}$ is the zero for $t\notin[\tau_1-\epsilon,\tau_2+\epsilon]$ and identical to $q$ otherwise. In particular, we can choose $\epsilon$ sufficiently small so that $\tau_1-\epsilon>0$. It follows that $q^\epsilon_t$ is $C^\infty$and satisfies
    \begin{equation}\label{eq:weak_dt}
        \partial_t q^\epsilon_t = - (q_t\partial_{x_i}\log p_t)*\partial_{x_i}\phi^\epsilon + q_t*\partial_{x_i}\partial_{x_i} \phi^\epsilon,\quad (x,t)\in \R^d\times (\tau_1,\tau_2).
    \end{equation}
    Indeed, for any $\psi\in C^\infty_c(\R^d\times(\tau_1,\tau_2))$ since $q_t$ solves \eqref{eq:FP} weakly we have
    \begin{equation*}
            \begin{aligned}
            \iint &\tilde{q}*\phi^\epsilon(x,t) \partial_t \psi(x,t) \d x \d t \\
            &= \iiiint \tilde{q}_{t-s}(x-y) \phi^\epsilon(y,s) \partial_t \psi(x,t) \d x\d y\d t\d s\\
            &=\iint \phi^\epsilon(y,s)\iint q_{t-s}(x-y)  \partial_t \psi(x,t) \d x\d t \d y \d s\\
            &= \iint \phi^\epsilon(y,s)\iint \left( -q_{t-s}(x-y)\partial_{x_i}\log p_{t-s}(x-y) + \partial_{x_i} q_{t-s}(x-y)\right) \partial_{x_i}\psi(x,t) \d x\d t \d y \d s\\
            &= \iint \partial_{x_i}\psi(x,t) \iint \left(- q_{t-s}(x-y)\partial_{x_i}\log p_{t-s}(x-y) + \partial_{x_i} q_{t-s}(x-y)\right) \phi^\epsilon(y,s)\d y \d x\d t\d s \\
            &= \iint \partial_{x_i}\psi(x,t) \left( -q_t\partial_{x_i}\log p_t + \partial_{x_i} q_t\right) * \phi^\epsilon (x)\d x\d t \\
            &= -\iint \psi(x,t)  \left( -(q_t\partial_{x_i}\log p_t)+ \partial_{x_i} q_t\right)*\partial_{x_i}\phi^\epsilon (x)\d x\d t
        \end{aligned}
    \end{equation*}
    where we used that $\phi^\epsilon(y,s)=0$ for $|s|\geq\epsilon$, $\psi(x,t)=0$ for $t\notin [\tau_1,\tau_2]$ and consequently we can replace $\tilde{q}_{t-s}$ by $q_{t-s}$ since they coincide for ${t-s}\in [\tau_1-\epsilon,\tau_2+\epsilon]$. Moreover, by the exponential decay with respect to $x$ and the compact support in $t$, we have that $q_t^\epsilon\rightarrow \tilde{q}_t$ in $L^p(\R^d\times \R)$ as $\epsilon\rightarrow 0$ for any $p\in [1,\infty)$ and pointwise a.e. by taking an appropriate subsequence. 
    In addition $\partial_{x_i}q_t^\epsilon\rightarrow\partial_{x_i}q_t$ in $L^2(\R^d\times (\tau_1,\tau_2)$ and, again by taking a subsequence, pointwise a.e. (see \cite[Theorem 7.4.1]{bogachev2002uniqueness} for a proof that $\nabla q_t \in L^2(\R^d\times (\tau_1,\tau_2))$). Since $q$ is moreover continuous, it follows that $q_t^\epsilon\rightarrow q_t$ uniformly on any set of the form $K\times (\tau_1,\tau_2)$ with $K\subset{\R^d}$ compact.
    Since $q_t$ satisfies \eqref{eq:density_exponential}\footnote{also for adapted $c_1,c_2$ on the slightly larger interval $[\tau_1-\epsilon,\tau_2+\epsilon]$} we find that
    \begin{equation}\label{eq:growthqeps}
        \begin{aligned}
            q_t^\epsilon(x) \leq \max_{y\in B_\epsilon(x)}q_t(y) \leq \max_{y\in B_\epsilon(x)}\exp\left(-c_2|y|^2\right)= \1_{B_{2\epsilon}(0)}(x) + \1_{B^c_{2\epsilon}(0)}(x) \exp\left(-c_2|x-\frac{x}{|x|}\epsilon|^2\right)\\
            = \1_{B_{2\epsilon}(0)}(x) + \1_{B^c_{2\epsilon}(0)}(x) \exp\left(-c_2|x|^2(1-\frac{\epsilon}{|x|})^2\right)\\
            \leq \1_{B_{2\epsilon}(0)}(x) + \1_{B^c_{2\epsilon}(0)}(x) \exp\left(-c_2|x|^2/4\right)\\
            \leq c\exp\left(-c_2|x|^2/4\right)
        \end{aligned}
    \end{equation}
    where the constants can be chosen uniformly with respect to $\epsilon$ and $t\in[\tau_1,\tau_2]$. Similarly we have 
    \begin{equation}\label{eq:growthqeps2}
        \begin{aligned}
            q_t^\epsilon(x) \geq \exp\left(-c_1(2|x|^2+1)\right)
        \end{aligned}
    \end{equation}
    which, in particular, implies that $q_t^\epsilon$ has finite entropy for all $t$. Let $\chi_R\in C^\infty_c(\R^d)$ be such that $0\leq \chi_R\leq 1$, $\chi_R(x) = 1$ for $x\in B_R(0)$ and $\chi_R(x)=0$ for $x\notin B_{R+1}(0)$. Using the fundamental theorem of calculus, we have\footnote{The time integral is over $(\tau_1,\tau_2)$ which we omit for simpler notation.}
    \begin{equation}\label{eq:AB}
        \begin{aligned}
            \int \chi_R q_{\tau_1}^\epsilon\log\frac{q_{\tau_1}^\epsilon}{\pi_{\tau_1}}\dd x - \int \chi_R q_{\tau_1}^\epsilon\log\frac{q_{\tau_2}^\epsilon}{\pi_{\tau_2}}\dd x &= \iint \chi_R\frac{\d}{\d \tau}\left(q^\epsilon_\tau\log\frac{q^\epsilon_\tau}{\pi_\tau}\right) \d \tau \d x\\
            &= \iint \chi_R\frac{\d}{\d \tau}q^\epsilon_\tau\log\frac{q^\epsilon_\tau}{\pi_\tau} + \chi_Rq^\epsilon_\tau\frac{\d}{\d \tau}\log\frac{q^\epsilon_\tau}{\pi_\tau} \d \tau \d x\\
            &= A + B.
        \end{aligned}
    \end{equation}
    Our goal is in the following to take the limit on both sides of \eqref{eq:AB}, first as $\epsilon\rightarrow 0$ and afterward as $R\rightarrow \infty$.
    \paragraph{Convergence of $B$}
    For the term $B$ we can estimate
    \begin{equation}
        \begin{aligned}
            B &= \iint \chi_R \frac{\pi_\tau\partial_\tau q_\tau^\epsilon-q^\epsilon_\tau \partial_\tau \pi_\tau}{\pi_\tau}\d \tau \d x \\
            &= \iint \chi_R\left(\partial_\tau q^\epsilon_\tau - \chi_Rq^\epsilon_\tau \partial_\tau \log \pi_\tau\right)\d \tau \d x\\
            &= \iint \chi_R\partial_\tau q^\epsilon_\tau \d x\d \tau- \iint \chi_Rq^\epsilon_\tau \partial_\tau \log \pi_\tau\d \tau \d x\\
            &= -\iint ((-q_\tau\partial_{x_i}\log p_\tau)*\phi^\epsilon + q_\tau*\partial_{x_i}\phi^\epsilon) \partial_{x_i} \chi_R\d x\d \tau - \iint \chi_Rq^\epsilon_\tau \partial_\tau \log \pi_\tau\d \tau \d x\\
            &= AA + BB.
        \end{aligned}
    \end{equation}
    As $\epsilon\rightarrow 0$ we then find for $BB$
    \begin{equation}
        \begin{aligned}
            \lim_{\epsilon\rightarrow 0} BB 
            = -\iint \chi_Rq_\tau \partial_\tau \log \pi_\tau\d \tau \d x
        \end{aligned}
    \end{equation}
    due to convergence of $q_\tau^\epsilon$ uniformly on compact sets and the fact that $\chi_Rq_\tau \log \pi_\tau$ is bounded and compactly supported with respect to $x$.
    For $AA$ we obtain using similar arguments that
    \begin{equation}
        \begin{aligned}
            \lim_{\epsilon\rightarrow 0} AA
            &= -\iint ((-q_\tau\partial_{x_i}\log p_\tau) + \partial_{x_i}q_\tau) \partial_{x_i} \chi_R\d x\d \tau
        \end{aligned}
    \end{equation}
    \paragraph{Convergence of $A$}
    Regarding the term $A$ in \eqref{eq:AB} we find
    \begin{equation}
        \begin{aligned}
            A = &\iint \chi_R ((-q_\tau\partial_{x_i}\log p_\tau)*\partial_{x_i}\phi^\epsilon + q_\tau*\partial_{x_i}\partial_{x_i}\phi^\epsilon) \log\frac{q_\tau^\epsilon}{\pi_\tau} \d x \d \tau \\
            = &-\iint ((-q_\tau\partial_{x_i}\log p_\tau)*\phi^\epsilon + q_\tau*\partial_{x_i}\phi^\epsilon) \partial_{x_i}\left( \chi_R \log\frac{q_\tau^\epsilon}{\pi_\tau} \right)\d x\d \tau\\
            =& -\iint ((-q_\tau\partial_{x_i}\log p_\tau)*\phi^\epsilon + q_\tau*\partial_{x_i}\phi^\epsilon) \partial_{x_i}\chi_R \log\frac{q_\tau^\epsilon}{\pi_\tau}\d x\d \tau\\
            &-\iint ((-q_\tau\partial_{x_i}\log p_\tau)*\phi^\epsilon + q_\tau*\partial_{x_i}\phi^\epsilon) \chi_R\partial_{x_i} \log\frac{q_\tau^\epsilon}{\pi_\tau}\d x \d \tau= AA + BB
        \end{aligned}
    \end{equation}
    Again, we consider the limits as $\epsilon\rightarrow 0$. Regarding $AA$ we first separate $AA$ into two terms
    \begin{equation}
        \begin{aligned}
            AA =& -\iint ((-q_t\partial_{x_i}\log p_t)*\phi^\epsilon + q_t*\partial_{x_i}\phi^\epsilon) \partial_{x_i}\chi_R \log q_\tau^\epsilon\d x\\
            &+\iint ((-q_t\partial_{x_i}\log p_t)*\phi^\epsilon + q_t*\partial_{x_i}\phi^\epsilon) \partial_{x_i}\chi_R \log\pi_\tau\d x.
        \end{aligned}
    \end{equation}
    We begin with the more difficult first term
    \begin{equation}
        \begin{aligned}
            \bigg|\iint ((-q_\tau\partial_{x_i}\log p_\tau)*\phi^\epsilon + q_\tau*\partial_{x_i}\phi^\epsilon) \partial_{x_i}\chi_R \log q_\tau^\epsilon \\
            - ((-q_\tau\partial_{x_i}\log p_\tau) + \partial_{x_i}q_\tau) \partial_{x_i}\chi_R \log q_\tau\d x\d \tau\bigg|\\
            \leq \bigg|\iint ((-q_\tau\partial_{x_i}\log p_\tau)*\phi^\epsilon + q_\tau*\partial_{x_i}\phi^\epsilon) \partial_{x_i}\chi_R \log q_\tau^\epsilon \\- ((-q_\tau\partial_{x_i}\log p_\tau) + \partial_{x_i}q_\tau) \partial_{x_i}\chi_R \log q_\tau^\epsilon\d x\d \tau\bigg|\\
            + \bigg|\iint ((-q_\tau\partial_{x_i}\log p_\tau) + \partial_{x_i}q_\tau) \partial_{x_i}\chi_R \log q_\tau^\epsilon \\- ((-q_\tau\partial_{x_i}\log p_\tau) + \partial_{x_i}q_\tau) \partial_{x_i}\chi_R \log q_\tau\d x\d \tau\bigg|\\
            \leq \iint \bigg|(-q_\tau\partial_{x_i}\log p_\tau)*\phi^\epsilon + q_\tau*\partial_{x_i}\phi^\epsilon) - ((-q_\tau\partial_{x_i}\log p_\tau) + \partial_{x_i}q_\tau)\bigg| \big|\partial_{x_i}\chi_R \log q_\tau^\epsilon\big| \d x\d \tau\\
            + \iint \big|((-q_\tau\partial_{x_i}\log p_\tau) + \partial_{x_i}q_\tau) \partial_{x_i}\chi_R \big|\bigg|  \log q_\tau^\epsilon - \log q_\tau\bigg|\d x\d \tau\rightarrow 0
        \end{aligned}
    \end{equation}
    as $\epsilon\rightarrow 0$. For the first integral this follows from $L^1_{loc}$ convergence of $(-q_t\partial_{x_i}\log p_t)*\phi^\epsilon + q_t*\partial_{x_i}\phi^\epsilon)$ and uniform boundedness of $\partial_{x_i}\chi_R \log q_\tau^\epsilon$ where we make use of the estimates \eqref{eq:growthqeps} and \eqref{eq:growthqeps2}. 
    For the second term, to the contrary, we use $L^1$ boundedness of $((-q_t\partial_{x_i}\log p_t) + \partial_{x_i}q_t) \partial_{x_i}\chi_R$ and uniform convergence of $\log q_\tau^\epsilon$ on compact sets. The latter uniform convergence follows from uniform convergence of $q^\epsilon_t$ on sets of the form $K\times (\tau_1,\tau_2)$ for $K\subset\R^d$ compact together with the fact that by \eqref{eq:growthqeps} and \eqref{eq:growthqeps2} the set
    \[
    U\coloneqq\{q^\epsilon_t(x)\;| (x,t)\in K\times (\tau_1,\tau_2), \; 0<\epsilon<\epsilon_{\mathrm{max}}\}
    \]
    satisfies $U\subset [a,b]$ for some $0<a<b<\infty$ and the logarithm restricted to $[a,b]$ is Lipschitz.
    For the second term in $AA$ similar arguments can be repeated using uniform upper and lower bounds of $\log\pi_\tau$ again on compact sets.

    Next, we tackle $BB$. We find 
    \begin{equation}
        \begin{aligned}
            \iint ((-q_\tau\partial_{x_i}\log p_\tau)*\phi^\epsilon + q_\tau*\partial_{x_i}\phi^\epsilon) \chi_R\partial_{x_i} \log\frac{q_\tau^\epsilon}{\pi_\tau}\d x\d \tau\\
            =\iint ((-q_\tau\partial_{x_i}\log p_\tau)*\phi^\epsilon + q_\tau*\partial_{x_i}\phi^\epsilon) \chi_R\partial_{x_i} \log q_\tau^\epsilon \d x\d \tau\\
            -\iint ((-q_\tau\partial_{x_i}\log p_\tau)*\phi^\epsilon + q_\tau*\partial_{x_i}\phi^\epsilon) \chi_R\partial_{x_i} \log \pi_\tau\d x\d \tau
        \end{aligned}
    \end{equation}
    and once again, start with the more difficult first term
    \begin{equation}
        \begin{aligned}
            \iint ((-q_\tau\partial_{x_i}\log p_\tau)*\phi^\epsilon + q_\tau*\partial_{x_i}\phi^\epsilon) \chi_R\partial_{x_i} \log q_\tau^\epsilon \d x\d \tau\\
            =\iint ((-q_\tau\partial_{x_i}\log p_\tau)*\phi^\epsilon + q_\tau*\partial_{x_i}\phi^\epsilon) \chi_R\frac{\partial_{x_i}q_\tau^\epsilon }{q_\tau^\epsilon }\d x\d \tau\\
            =\iint \chi_R\frac{\partial_{x_i}q_\tau^\epsilon }{q_\tau^\epsilon }(-q_\tau\partial_{x_i}\log p_\tau)*\phi^\epsilon + \frac{(\partial_{x_i}q_\tau^\epsilon)^2}{q_\tau^\epsilon }\chi_R\d x\d \tau.
        \end{aligned}
    \end{equation}
    The second term can be understood as the squared $L^2$ norm of $\frac{\partial_{x_i}q_\tau^\epsilon}{\sqrt{q_\tau^\epsilon}}\sqrt{\chi_R}$ which converges if the respective function converges in $L^2$. We show the latter,
    \begin{equation}
        \begin{aligned}
            \|\frac{\partial_{x_i}q_\tau^\epsilon}{\sqrt{q_\tau^\epsilon}}\sqrt{\chi_R} - \frac{\partial_{x_i}q_\tau}{\sqrt{q_\tau}}\sqrt{\chi_R}\|_2\\
            \leq \|\left( \partial_{x_i}q_\tau^\epsilon - \partial_{x_i}q_\tau\right) \frac{\sqrt{\chi_R}}{\sqrt{q_\tau^\epsilon}}\|_2 
            + \|\partial_{x_i}q_\tau \left(\frac{\sqrt{\chi_R}}{\sqrt{q_\tau^\epsilon}} - \frac{\sqrt{\chi_R}}{\sqrt{q_\tau}}\right)\|_2.
        \end{aligned}
    \end{equation}
    Using uniform, strictly positive upper and lower bounds of $q_\tau^\epsilon$ on $B_{R+1}(0)\times (\tau_1,\tau_2)$, the $L^2$ convergence of $\partial_{x_i}q_\tau^\epsilon$ and uniform convergence on compact sets of $\frac{\sqrt{\chi_R}}{\sqrt{q_\tau^\epsilon}}$ we find that the above tends to zero as $\epsilon\rightarrow 0$. The second term in $BB$ is handled similarly.
    \paragraph{Left-hand side of \eqref{eq:AB}}
    Regarding the left-hand side of \eqref{eq:AB} it follows that
    \begin{equation}
        \lim_{\epsilon\rightarrow 0}\int \chi_R q_{\tau_1}^\epsilon\log\frac{q_{\tau_1}^\epsilon}{\pi_{\tau_1}}\dd x = \int \chi_R q_{\tau_1}\log\frac{q_{\tau_1}}{\pi_{\tau_1}}\dd x
    \end{equation}
    again by uniform convergence of $q^\epsilon$ on compact sets and appropriate boundedness rendering the logarithm Lipschitz continuous. We therefore find 
    \begin{equation}
        \begin{aligned}
            \int &\chi_R q_{\tau_2}\log\frac{q_{\tau_2}}{\pi_{\tau_2}}\dd x - \int \chi_R q_{\tau_1}\log\frac{q_{\tau_1}}{\pi_{\tau_1}}\dd x\\
            =& -\iint ((-q_\tau\partial_{x_i}\log p_\tau) + \partial_{x_i}q_\tau) \partial_{x_i}\chi_R \log\frac{q_\tau}{\pi_\tau}\d x\d \tau\\
            &-\iint ((-q_\tau\partial_{x_i}\log p_\tau) + \partial_{x_i}q_\tau) \chi_R\partial_{x_i} \log\frac{q_\tau}{\pi_\tau}\d x \d \tau\\
            & -\iint ((-q_\tau\partial_{x_i}\log p_\tau) + \partial_{x_i}q_\tau) \partial_{x_i} \chi_R\d x\d \tau - \iint \chi_Rq\tau \partial_\tau \log \pi_\tau\d \tau \d x.
        \end{aligned}
    \end{equation}
    Using Lebesgue's dominated convergence theorem and letting $R\rightarrow \infty$ yields
    \begin{equation}
        \begin{aligned}
            \KL(\mu_{\tau_2}|\pi_{\tau_2})-& \KL(\mu_{\tau_1}|\pi_{\tau_1})\\
            &= -\iint ((-q_\tau\partial_{x_i}\log p_\tau) + \partial_{x_i}q_\tau) \partial_{x_i}\log\frac{q_\tau}{\pi_\tau}\d x\d \tau - \iint q_\tau \partial_\tau \log \pi_\tau\d \tau \d x\\
            &= -\iint (q_\tau\partial_{x_i}\log \frac{q_\tau}{p_\tau}) \partial_{x_i}\log\frac{q_\tau}{\pi_\tau}\d x\d \tau - \iint q_\tau \partial_\tau \log \pi_\tau\d \tau \d x\\
            &= -\iint |q_\tau\partial_{x_i}\log \frac{q_\tau}{p_\tau}|^2\d x\d \tau - \iint q_\tau \partial_\tau \log \pi_\tau\d \tau \d x.
        \end{aligned}
    \end{equation}
    From the above it is also apparent that 
    \[
        \tau \mapsto \KL(\mu_\tau|\pi_\tau)
    \]
    is absolutely continuous as the integral of an $L^1$ function. Consequently, $\KL(q_\tau|p_\tau)$ is a.e. differentiable with respect to $\tau$ and we have 
    \begin{equation}
        \begin{aligned}
            \frac{\d}{\d \tau}\KL(q_{\tau}|\pi_{\tau})= -\int |q_\tau\partial_{x_i}\log \frac{q_\tau}{p_\tau}|^2\d x - \int q_\tau \partial_\tau \log \pi_\tau \d x.
        \end{aligned}
    \end{equation}
    concluding the proof.
\end{proof}

\printbibliography

\end{document}